\theoremstyle{plain}
\newtheorem{theorem}{Theorem}
\newtheorem{corollary}[theorem]{Corollary}
\theoremstyle{remark}
\newtheorem{algorithm}{Algorithm}
\newtheorem{remark}[theorem]{Remark}
\newcommand{\e}{{\mathrm e}}
\title[Performance guarantees submodular maximization]{New performance guarantees for the greedy maximization of submodular set functions} 
\author{Jussi Laitila}
\author{Atte Moilanen}
\address{Department of Biosciences, P.O. Box 65, FI-00014 University of Helsinki, Finland}
\email{jussi.laitila@helsinki.fi}
\email{atte.moilanen@helsinki.fi}
\subjclass[2010]{90C59, 90C30, 68W25}
\keywords{approximation, cardinality, convex optimization, greedy algorithm, maximization, steepest ascent}
\thanks{J.L. and A.M were supported by the ERC-StG grant 260393. A.M. was supported by the Academy of Finland Centre of Excellence programme 2012 - 2017, grant 250444, and the Finnish Natural Heritage Services (Mets\"ahallitus).}
\date{\today}
\begin{document}

\maketitle

\begin{abstract}
We present new tight performance guarantees for the greedy maximization of nondecreasing submodular set functions. 
Our main result first provides a performance guarantee in terms of the overlap of the optimal and greedy solutions. 
As a consequence we improve performance guarantees of Nemhauser, Wolsey and Fisher (1978) and Conforti and Cornu\'ejols (1984) for maximization over subsets, which are at least half the size of the problem domain. As a further application, we obtain a new tight performance guarantee in terms of the cardinality of the problem domain. 
\keywords{Approximation \and Cardinality \and Convex optimization \and Greedy algorithm  \and Maximization  \and Steepest ascent}
\end{abstract}

\section{Introduction}




Let $X$ be a finite set, $X=\{x_1,\dots, x_n\}$, and let $T$ be an integer such that $0 < T\le n$. We consider the cardinality-constrained maximization 
problem
\begin{align}\label{prob1}
\max\{f(S)\colon |S|=T, S \subset X\},
\end{align}
where $f\colon 2^X\to \mathbb R_+$ is a submodular set function.  
Recall that $f$ is submodular if 
\begin{align}\label{subadd}
f(S)+f(R)\ge f(S\cup R) + f(S\cap R)
\end{align}
for all $S,R\subset X$; 
see, e.g., \cite{NWF78}.
We further assume that $f$ is nondecreasing; $f(S) \le f(R)$ for all $S\subset R$, and, without loss of generality, that $f(\emptyset)=0$. 
We consider the following well-known greedy algorithm for solving problem \eqref{prob1}: 

\begin{algorithm}\label{alga}
\
\begin{description}
\item Step 0: Set $S_0=\emptyset$. Go to Step 1.
\item Step $t$ ($1\le t \le T$): Select any $x_t\in S_{t-1}$ such that
\begin{align*}
f(S_{t-1}\cup \{x_t\})=\max\{f(S_{t-1}\cup \{x\})\colon x\in X\setminus S_{t-i}\}.
\end{align*}
Set $S_t=S_{t-1}\cup \{x_t\}$. Go to step $t+1$.
\item Step $T+1$: Set $S_{gr} = S_{T}$. Stop.
\end{description}
\end{algorithm}

Algorithm \ref{alga} 
has been extensively studied in the literature. By the Rado-Edmonds theorem (\cite{E71} or \cite{CC84}), it finds an optimal solution when $f$ is an additive set function, i.e., when \eqref{subadd} holds with an equality for all $S,R\subset X$. Nemhauser, Wolsey and Fisher \cite{NWF78} (see also \cite{CFN77}, \cite{NW78}) gave the following performance guarantee for 
Algorithm \ref{alga} 
for nonadditive functions $f$: 
\begin{align}\label{NWF}
\frac{f(S_{gr})}{f(S_{opt})}\ge 1-\left(1-\frac{1}{T}\right)^{T}=:G_{NWF}(T),
\end{align}
where $S_{opt}$ is an optimal solution to problem \eqref{prob1}. 
Conforti and Cornu\'ejols \cite{CC84} improved \eqref{NWF} to 
\begin{align}\label{CC}
\frac{f(S_{gr})}{f(S_{opt})}\ge \frac{1}{\alpha}\left(1-\left(1-\frac{\alpha}{T}\right)^{T}\right) =: G_{CC}(T,\alpha),
\end{align}
for $\alpha\in (0,1]$, where $\alpha\in [0,1]$ is the total curvature
\begin{align*}
\alpha=\max\left\{1-\frac{f(X)-f(X\setminus\{x\})}{f(\{x\})-f(\emptyset)}\colon x\in X, f(\{x\})\ne f(\emptyset)\right\}.
\end{align*}
It is known that $\alpha\in (0,1]$ if and only if $f$ is nonadditive \cite{CC84}. 
Clearly, $G_{NWT}(T)=G_{CC}(T,1)$ and since 
$G_{CC}(T,\alpha)\to 1$ as $\alpha \to 0^+$, \eqref{CC} can be viewed as a generalization of the Rado-Edmonds theorem.
The above performance guarantees further satisfy the estimates
\begin{align*}
G_{CC}(T,\alpha)\ge \max\left\{G_{NWF}(T), \frac{1-\e^{-\alpha}}{\alpha}\right\} \ge 1-\e^{-1},
\end{align*}
for all $\alpha$ and $T$.
The guarantees \eqref{NWF} and \eqref{CC} are tight for suitable choices of parameters $T$ and $\alpha$. For example, for all $\alpha\in (0,1]$ and $T\ge 1$ there is a problem of the type \eqref{prob1} and the corresponding greedy solution $S_{gr}$ such that $f(S_{gr})=G_{CC}(T,\alpha)f(S_{opt})$ \cite{CC84}.

Submodular optimization has played a central role in operations research and combinatorial optimization \cite{GS07}. By now it has applications in various fields, including 
computer science \cite{KG14}, economics \cite{T98} and, more recently, ecology (\cite{M07}, \cite{GKKCM11}, \cite{BS12}).
Problem \eqref{prob1} and the above performance guarantees have been extended to various other settings and problem structures, related to, for example, matroid (\cite{FNW78}, \cite{CC84}) and knapsack (\cite{S04}, \cite{KST13}) constraints, continuous algorithms (\cite{V10}, \cite{CCPV11}), nonmonotone functions \cite{FMV11}, nonsubmodular functions \cite{WMWP} and supermodular minimization (\cite{I01}, \cite{IL06}).

To authors' knowledge, previously presented performance guarantees either do not depend on $T$ or $n$, or, like \eqref{NWF} and \eqref{CC}, they are decreasing in $T$. However, when $T=n$, it is clear that $S_{opt}=S_{gr}$, so the greedy algorithm returns the optimal solution. This suggests that any performance guarantee should in fact be 
improving when $T$ approaches and is close enough to $n$. We show that this is indeed the case. 
More generally, we show that increasing degree of overlap $m=|S_{opt}\cap S_{gr}|$ between the sets $S_{opt}$ and $S_{gr}$ improves the performance guarantees. While in applications the overlap $m$ may not be known, we can give this quantity a useful lower bound. In fact, when $T > n/2$, we have $m \ge 2T-n>0$. Our results thus have particular relevance for optimization problems where the maximum is sought over subsets of cardinality larger than $n/2$. 

Let 
\begin{align*}
G(T,\alpha,m)=\frac{1}{\alpha}\left(1-\left(1-\frac{\alpha m}{T}\right)\left(1-\frac{\alpha}{T}\right)^{T-m}\right)
\end{align*}
and $\widetilde{G}(T,\alpha,n)=G(T,\alpha,\max\{0, 2T-n\})$. Our main result is the following.

\begin{theorem}\label{thm1} Let $\alpha \in (0,1]$, let $1\le T \le n$ and let $S_{opt}$ and $S_{gr}$ be an optimal, repectively a greedy, solution to problem \eqref{prob1} and let $m=|S_{opt} \cap S_{gr}|$. Then
\begin{align}\label{thm1eq1}
\frac{f(S_{gr})}{f(S_{opt})}\ge G(T,\alpha,m)\ge \widetilde{G}(T,\alpha,n).
\end{align}
Moreover, these bounds are tight in the following sense: for every $\alpha\in (0,1]$ and numbers $n$ and $T$ such that $1\le T\le n$, there is a problem of the type \eqref{prob1} and its greedy solution $S_{gr}$ such that  $\max\{0,2T-n \}=|S_{opt} \cap S_{gr}|$ and
\begin{align*}
\frac{f(S_{gr})}{f(S_{opt})} = \widetilde{G}(T,\alpha,n).
\end{align*}
\end{theorem}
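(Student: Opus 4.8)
The plan is to rerun the inductive analysis behind \eqref{NWF} and \eqref{CC} while bookkeeping, along the greedy chain $\emptyset=S_0\subset S_1\subset\dots\subset S_T=S_{gr}$, the integers $m_t=|S_{opt}\cap S_t|$, so that $m_0=0$, $m_T=m$, and $m_t-m_{t-1}\in\{0,1\}$. Write $\rho_t=f(S_t)-f(S_{t-1})$. The starting inequalities are
\[
f(S_{opt})\le f(S_{opt}\cup S_{t-1})\le f(S_{t-1})+(T-m_{t-1})\,\rho_t\qquad(1\le t\le T),
\]
from monotonicity, submodularity (decompose $f(S_{opt}\cup S_{t-1})-f(S_{t-1})$ into marginals, each at most some $f(S_{t-1}\cup\{x\})-f(S_{t-1})$ with $x\in S_{opt}\setminus S_{t-1}$), the greedy rule (each such marginal is $\le\rho_t$), and $|S_{opt}\setminus S_{t-1}|=T-m_{t-1}$; this is \eqref{NWF} with the coefficient $T$ sharpened to $T-m_{t-1}$. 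I would then feed in the total curvature exactly as in \cite{CC84}: using $f(S\cup\{x\})-f(S)\ge(1-\alpha)f(\{x\})$ together with $f(\{x\})\ge\rho_s$ when $x$ is the element picked at step $s$, one bounds $f(S_{opt}\cup S_{t-1})$ from below by $f(S_{opt})+(1-\alpha)\sum_{s\le t-1,\,x_s\notin S_{opt}}\rho_s$; combining this with the upper bound above and subtracting upgrades the display to
\[
f(S_{opt})\le w_{t-1}+(T-m_{t-1})\,\rho_t,\qquad w_t:=\sum_{\substack{s\le t\\ x_s\in S_{opt}}}\rho_s+\alpha\!\!\sum_{\substack{s\le t\\ x_s\notin S_{opt}}}\!\!\rho_s,
\]
with $w_0=0$, and $w_t=f(S_t)$ when $\alpha=1$ (recovering the classical recursion).

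Setting $z_t=f(S_{opt})-w_t$, so $z_0=f(S_{opt})$, the displayed inequality and the update rule for $w_t$ give the one-step recursion $z_t\le\bigl(1-\tfrac{1}{T-m_{t-1}}\bigr)z_{t-1}$ at steps where the greedy pick lies in $S_{opt}$, and $z_t\le\bigl(1-\tfrac{\alpha}{T-m_{t-1}}\bigr)z_{t-1}$ otherwise. I would then exploit the identity $f(S_{opt})-f(S_{gr})=z_T-(1-\alpha)Q$, where $Q=\sum_{x_s\notin S_{opt}}\rho_s\ge0$ is the total greedy gain accrued from non-optimal picks, together with a short exchange argument on the admissible sequences $(m_t)$: postponing an increment of $m_\bullet$ only enlarges the resulting upper bound on $f(S_{opt})-f(S_{gr})$, so it suffices to treat the extremal case in which greedy makes its $T-m$ non-optimal selections first and its $m$ optimal selections last. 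In that case the recursion telescopes cleanly: $z_{T-m}\le(1-\tfrac\alpha T)^{T-m}f(S_{opt})$, then $z_T\le(1-\tfrac mT)z_{T-m}$, and $\alpha Q=z_0-z_{T-m}$; substituting into the identity and simplifying yields exactly $f(S_{gr})\ge G(T,\alpha,m)f(S_{opt})$, the first inequality of \eqref{thm1eq1}.

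For the second inequality, inclusion--exclusion gives $m=|S_{opt}\cap S_{gr}|\ge|S_{opt}|+|S_{gr}|-|X|=2T-n$, and trivially $m\ge0$, so $m\ge\max\{0,2T-n\}$; it then suffices that $j\mapsto G(T,\alpha,j)$ be nondecreasing on $\{0,\dots,T\}$, which reduces to $\varphi(j+1)\le\varphi(j)$ for $\varphi(j)=(1-\tfrac{\alpha j}{T})(1-\tfrac\alpha T)^{T-j}$, and a one-line computation gives $\varphi(j)-\varphi(j+1)=\varphi(j)\cdot\tfrac{\alpha^{2} j}{(T-\alpha j)(T-\alpha)}\ge0$.

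Finally, for tightness, fix $\alpha$, $n$, $T$ and put $m^\ast=\max\{0,2T-n\}$; the aim is an instance whose greedy run realizes the extremal trajectory above with overlap exactly $m^\ast$. Generalizing the extremal example of \cite{CC84}, I would take $X=A\sqcup B\sqcup C$ together with $n-(2T-m^\ast)\ge0$ further inert elements, with $|A|=|C|=T-m^\ast$ and $|B|=m^\ast$, intend $S_{gr}=A\cup B$ (the elements of $A$ chosen first, then those of $B$) and $S_{opt}=B\cup C$, and define $f$ as a sum of elementary nondecreasing submodular terms whose marginal values decay by the factor $1-\alpha/T$ along greedy's ``$A$-phase'' and are calibrated so that $f(S_{gr})/f(S_{opt})=\widetilde G(T,\alpha,n)$. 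I expect the main obstacle of the whole proof to be precisely this construction: one must simultaneously verify that $f$ is nondecreasing and submodular with total curvature exactly $\alpha$, that the greedy algorithm (with suitable tie-breaking) indeed selects $A$ and then $B$, and that $B\cup C$ is genuinely optimal --- elementary, as in \cite{CC84}, but requiring careful bookkeeping; the curvature-discounting step in the lower bound is the other point where care is needed.
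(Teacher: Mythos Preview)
Your plan is essentially the same as the paper's. Both arguments start from the curvature-refined inequality
\[
f(S_{opt})\le \alpha\!\!\sum_{\substack{s\le t-1\\ x_s\notin S_{opt}}}\!\!\rho_s+\sum_{\substack{s\le t-1\\ x_s\in S_{opt}}}\!\!\rho_s+(T-m_{t-1})\rho_t
\]
(this is precisely \cite[Lemma~5.1]{CC84}, which the paper invokes), reduce to the linear program of minimizing $\sum_t\rho_t$ under these $T$ constraints, and then argue that among all index sets $J'\subset\{1,\dots,T\}$ of size $m$ recording where greedy picks from $S_{opt}$, the LP value is smallest when $J'=\{T-m+1,\dots,T\}$. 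The paper obtains this last reduction by quoting two facts extracted from the proof of \cite[Lemma~5.2]{CC84}; your ``short exchange argument on the admissible sequences $(m_t)$'' is exactly that lemma, and it is not quite a one-liner --- be prepared to carry out the adjacent-swap comparison carefully. Once reduced to the extremal $J'$, the only genuine difference is bookkeeping: the paper lower-bounds the LP by exhibiting an explicit feasible dual vector and summing it, whereas you compute the primal optimum directly by taking all constraints tight and telescoping the recursion for $z_t$. Both yield $G(T,\alpha,m)$, and your computation via $\alpha Q=z_0-z_{T-m}$ and $z_T\le(1-m/T)z_{T-m}$ is correct in that extremal configuration; just make clear that after the exchange step you are working with the LP optimizer for the extremal $J'$, not with the quantities from the actual greedy run (your phrasing blurs this slightly). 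Your monotonicity check for $G(T,\alpha,\cdot)$ and the tightness construction (split $X$ into an ``$A$-phase'' with geometrically decaying marginals and a $B$-block shared with $S_{opt}$) match the paper's construction, which takes $r=n-T$ elements $a_i$ and $T$ elements $b_j$ with $f$ defined so that greedy can pick $a_1,\dots,a_{n-T}$ first and then $2T-n$ of the $b_j$.
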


We postpone the proof of Theorem \ref{thm1} to Section \ref{secproof}.

\begin{remark}
Theorem \ref{thm1} strictly improves \eqref{CC} and provides further examples of cases where the performance guarantee equals one, i.e., generalizations of the Rado-Edmonds theorem. Indeed, 
for all $T$ and $n$ such that $T > n/2$, we have 
\begin{align*}
\widetilde G(T,\alpha,n) > G_{CC}(T,\alpha).
\end{align*}
For $T=n$, we get $\widetilde G(n,\alpha,n)=1$.
Note that, by \eqref{CC}, $\lim_{\alpha\to 0^+}\widetilde G(T,\alpha,n)=1$. 
Moreover, in the case $m=T$, we again get $G(T,\alpha,T)=1$. 
\end{remark}

Using Theorem \ref{thm1}, one can derive other new performance guarantees for the greedy algorithm. As an example of independent interest, we present the following performance guarantee in terms of $n$ only.


\begin{corollary}\label{thm3} Let $\alpha \in (0,1]$, $1\le T \le n$, and let $S_{opt}$ and $S_{gr}$ be an optimal, repectively a greedy, solution to problem \eqref{prob1}. Then
\begin{align}\label{eqcor}
\frac{f(S_{gr})}{f(S_{opt})}\ge \frac{1}{\alpha}\left(1-\left(1-\frac{\alpha}{\left\lfloor\frac{n}{2}\right\rfloor}\right)^{\left\lfloor \frac{n}{2}\right\rfloor}\right)\ge 
\frac{1}{\alpha}\left(1-\left(1-\frac{2\alpha}{n}\right)^{n/2}\right),
\end{align}
where $\lfloor x \rfloor$ denotes the largest integer not greater than $x$.
The left-hand estimate is tight in the following sense: for every $\alpha\in (0,1]$ and $n\ge 2$, there is a problem of the type \eqref{prob1} and its greedy solution $S_{gr}$ such that  
\begin{align*}
\frac{f(S_{gr})}{f(S_{opt})} = \frac{1}{\alpha}\left(1-\left(1-\frac{\alpha}{\left\lfloor\frac{n}{2}\right\rfloor}\right)^{\left\lfloor \frac{n}{2}\right\rfloor}\right).
\end{align*}
\end{corollary}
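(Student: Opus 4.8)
The plan is to obtain Corollary \ref{thm3} from Theorem \ref{thm1} by minimising the bound there over the admissible cardinalities. Since Theorem \ref{thm1} already yields $f(S_{gr})/f(S_{opt})\ge\widetilde{G}(T,\alpha,n)$, a quantity depending only on $T$, $\alpha$ and $n$, it suffices to establish three things: the analytic inequality $\widetilde{G}(T,\alpha,n)\ge\frac{1}{\alpha}\bigl(1-(1-\tfrac{\alpha}{\lfloor n/2\rfloor})^{\lfloor n/2\rfloor}\bigr)$ for every $T$ with $1\le T\le n$; the elementary comparison of $\frac{1}{\alpha}\bigl(1-(1-\tfrac{\alpha}{\lfloor n/2\rfloor})^{\lfloor n/2\rfloor}\bigr)$ with $\frac{1}{\alpha}\bigl(1-(1-\tfrac{2\alpha}{n})^{n/2}\bigr)$; and the existence of a tight example.

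The last two are short. For the comparison, write $g(s)=\frac{1}{\alpha}\bigl(1-(1-\tfrac{\alpha}{s})^{s}\bigr)$ for real $s>0$, so that $g(T)=G_{CC}(T,\alpha)$ for integer $T$ and the two quantities in question are $g(\lfloor n/2\rfloor)$ and $g(n/2)$; differentiating $s\mapsto s\log(1-\alpha/s)$ shows $s\mapsto(1-\alpha/s)^{s}$ is increasing, hence $g$ is decreasing, and $\lfloor n/2\rfloor\le n/2$ finishes it. For tightness, apply the tightness part of Theorem \ref{thm1} with $T=\lfloor n/2\rfloor$: then $\max\{0,2T-n\}=0$, and the problem it produces satisfies $f(S_{gr})/f(S_{opt})=\widetilde{G}(\lfloor n/2\rfloor,\alpha,n)=G(\lfloor n/2\rfloor,\alpha,0)=G_{CC}(\lfloor n/2\rfloor,\alpha)$, i.e.\ equality holds in the left-hand estimate.

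The heart of the matter is the first inequality, namely that $\widetilde{G}(\cdot,\alpha,n)$ is minimised over $\{1,\dots,n\}$ near $T=n/2$. I would split the range at $n/2$. For $1\le T\le\lfloor n/2\rfloor$ one has $2T-n\le0$, hence $\widetilde{G}(T,\alpha,n)=G(T,\alpha,0)=G_{CC}(T,\alpha)$, and the desired bound is exactly the monotonicity of $G_{CC}(\cdot,\alpha)$ proved above. For $\lfloor n/2\rfloor<T\le n$ one has $\widetilde{G}(T,\alpha,n)=G(T,\alpha,2T-n)$ with $1\le 2T-n\le T$, and it remains to show $G(T,\alpha,2T-n)\ge G_{CC}(\lfloor n/2\rfloor,\alpha)$.

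This last step is where I expect the main difficulty. A useful preliminary is the monotonicity of $m\mapsto G(T,\alpha,m)$ on $\{0,1,\dots,T\}$, which follows from a one-line ratio estimate: the quantity $(1-m\tfrac{\alpha}{T})(1-\tfrac{\alpha}{T})^{T-m}$ is nonincreasing in $m$, since its ratio at $m+1$ over $m$ equals $(1-(m+1)\tfrac{\alpha}{T})/\bigl((1-m\tfrac{\alpha}{T})(1-\tfrac{\alpha}{T})\bigr)$, which is $\le 1$ precisely because $0\le m(\alpha/T)^2$. This alone only gives $G(T,\alpha,2T-n)\ge G(T,\alpha,0)=G_{CC}(T,\alpha)$, which is weaker than the target once $T>\lfloor n/2\rfloor$, so one must use the joint dependence on $T$ and $2T-n$. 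After the substitution $k=n-T$ (so $0\le k<n/2$ and $T-(2T-n)=k$) the target becomes $(1-\tfrac{\alpha(n-2k)}{n-k})(1-\tfrac{\alpha}{n-k})^{k}\le(1-\tfrac{\alpha}{\lfloor n/2\rfloor})^{\lfloor n/2\rfloor}$; I would prove this by taking logarithms and using concavity of $t\mapsto\log(1-\alpha t)$ to estimate the left-hand side, or equivalently by establishing the monotonicity $\widetilde{G}(T+1,\alpha,n)\ge\widetilde{G}(T,\alpha,n)$ for $T\ge n/2$ (the phenomenon, anticipated in the introduction, that the guarantee improves as $T$ approaches $n$) and then evaluating at the boundary. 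Handling the endpoint cases $k=0$ and $k$ close to $n/2$ carefully is the crux.
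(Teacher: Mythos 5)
Your overall plan follows the paper's route: reduce \eqref{eqcor} to minimizing $\widetilde G(\cdot,\alpha,n)$ over admissible $T$, use the monotonicity of $s\mapsto(1-\alpha/s)^{s}$ (hence of $G_{CC}(\cdot,\alpha)$) to handle both the range $T\le\lfloor n/2\rfloor$ and the right-hand inequality, and get tightness from Theorem \ref{thm1} with $T=\lfloor n/2\rfloor$; those parts are correct. The gap is exactly at the step you yourself flag as the crux: for $T>\lfloor n/2\rfloor$ you never prove $G(T,\alpha,2T-n)\ge G_{CC}(\lfloor n/2\rfloor,\alpha)$, you only name strategies (logarithms and concavity, or monotonicity of $\widetilde G(T,\alpha,n)$ in $T$ on $[n/2,n]$ followed by evaluation at the boundary).

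That step cannot be closed, because the inequality is false whenever $n$ is odd. Since $G(T,\alpha,1)=\frac1\alpha\bigl(1-(1-\alpha/T)^{T}\bigr)=G_{CC}(T,\alpha)$, taking $T=(n+1)/2$ (so $m=2T-n=1$) gives $\widetilde G\bigl((n+1)/2,\alpha,n\bigr)=G_{CC}\bigl((n+1)/2,\alpha\bigr)$, which by the very monotonicity you proved is strictly smaller than $G_{CC}(\lfloor n/2\rfloor,\alpha)$. Concretely, for $n=5$, $k=2$, $\alpha=1$ your target inequality reads $(2/3)^{3}\le(1/2)^{2}$, i.e.\ $8/27\le 1/4$, which is false; and your fallback plan lands at the boundary value $G_{CC}(\lceil n/2\rceil,\alpha)$, which lies below the desired bound. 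Nor is this an artifact of going through Theorem \ref{thm1}: the tightness construction of Theorem \ref{thm1} with $n=5$, $T=3$, $\alpha=1$ yields an actual instance of \eqref{prob1} with total curvature $1$ and $f(S_{gr})/f(S_{opt})=19/27<3/4$, so the left-hand estimate of \eqref{eqcor} itself fails for odd $n$. The paper's own proof buries the same problem in the ``easy to check'' assertion that for odd $n$ the integer minimum of $\widetilde G$ is at $T=(n-1)/2$ (and the continuous one at $T=n/2$); in fact the integer minimum is at $T=\lceil n/2\rceil$. So the difficulty you ran into is real: the statement needs $\lceil n/2\rceil$ in place of $\lfloor n/2\rfloor$ (for even $n$ the two coincide and your outline can be completed, though the comparison $G(T,\alpha,2T-n)\ge G_{CC}(n/2,\alpha)$ for $T>n/2$ still needs an actual argument), and with the ceiling the second inequality in \eqref{eqcor} no longer follows as written.
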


\begin{proof}
If $n$ is an odd integer, it is easy to check that the minimum of $\widetilde G(T,\alpha,n)$ over all integers $T$ with $0\le T \le n$ is $\widetilde G((n-1)/2,\alpha,n)$. Moreover, when treated as a continuous function of $T$, $\widetilde G(T,\alpha,n)$ attains its minimum at $T=n/2$. Together with Theorem \ref{thm1} this yields \eqref{eqcor}. Tightness of the left-hand inequality in \eqref{eqcor} follows from 
Theorem \ref{thm1} with the choice $T=\left\lfloor\frac{n}{2}\right\rfloor$.
\end{proof}


\section{Proof of Theorem \ref{thm1}}\label{secproof}
In this section we present a proof of Theorem \ref{thm1}. We first prove \eqref{thm1eq1}. Note that the right-hand inequality in \eqref{thm1eq1} follows directly from $m=|S_{opt}\cap S_{gr}|\ge \max\{0, 2T-n\}$ and the fact that $G(T,\alpha,m)$ is increasing in $m$. 

We next prove the left-hand inequality in \eqref{thm1eq1}.
We may assume that $0< m < T$. Indeed, if $m=T$, then $S_{gr} = S_{opt}$ and the claim is trivial. If $m=0$, the claim follows from \eqref{CC}. 

Let $S_0=\emptyset$ and $S_t=\{y_{1},\dots,y_{t}\}\subset X$ be the successive sets chosen by the greedy algorithm for $t=1,\dots,T$, so that $S_0\subset S_1\subset\dots\subset S_T$. 
Let 
\begin{align*}
a_t=\frac{f(S_t)-f(S_{t-1})}{f(S_{opt})},
\end{align*}
for $t=1,\dots,T$.
Because $f$ is nondecreasing, each $a_t$ is nonnegative and 
\begin{align*}
\frac{f(S_{gr})}{f(S_{opt})}=\sum_{i=1}^T a_i.
\end{align*}
Let $J=S_{gr} \cap S_{opt}$. 
Let $1\le j_1\le\dots\le j_m\le T$ denote the indices for which $J=\{y_{j_1},\dots y_{j_m}\}$.
Denote $j_0=0$ and $j_{m+1}=T$.
By Lemma 5.1 of \cite{CC84}, we obtain the $T$ inequalities
\begin{align*}
1 \le \alpha \sum_{\{i\colon y_i\in S_{t-1} \setminus S_{opt}\}}a_i+ \sum_{\{i\colon y_i\in S_{t-1}\cap S_{opt}\}}a_i + (T-|S_{t-1}\cap S_{opt}|)a_{t},
\end{align*}
for $t=1,\dots,T$.  Consequently, 
\begin{align*}
\frac{f(S_{gr})}{f(S_{opt})} \ge B(J'), 
\end{align*}
where
$J' = \{j_1,\dots,j_m\}$ and, for $U\subset \{1,\dots,n\}$, $B(U)$ denotes
the minimum of the linear program
\begin{align}\label{eq:min1}
\textrm{minimize}\quad&\sum_{i=1}^T b_i\\
\textrm{s.t.}\quad&
\alpha \sum_{i\in V_{t-1}\setminus U}b_i+ \sum_{i\in U\cap V_{t-1}}b_i + (T-|U\cap V_{t-1}|)b_{t} \ge 1,\nonumber\\
&b_t\ge 0,\nonumber
\end{align}
for $t=1,\dots,T$, where $V_t=\{1,\dots,t\}$. We next apply the proof of \cite[Lemma 5.2]{CC84}, which implies the following two facts:
\begin{itemize}
\item[(i)] If $T\notin U$, then $B(U)\ge B(\{T-|U\cap V_{T-1}|,\dots,T\})$,
\item[(ii)] $B(\{T-l,\dots,T\})\ge B(\{T-l+1,\dots,T\})$, for all $1\le l\le T-1$.
\end{itemize}
In particular, if $j_m < T$, then $B(J')\ge B(\{T-m,\dots,T\})\ge B(\{T-m+1,\dots,T\})$. Moreover, if $j_m=T$, then $B(J')=B(J'\setminus \{T\})$, so that using (i), $B(J')\ge B(\{T-m+1,\dots,T\})$. 
Consequently,
\begin{align*}
\frac{f(S_{gr})}{f(S_{opt})} \ge \sum_{i=1}^T b^*_i, 
\end{align*}
where $b^*=(b^*_1,\dots,b^*_T)$ is an optimal solution to the problem \eqref{eq:min1} with $U=\{T-m+1,\dots,T\}$.
By the weak duality theorem, we get that
\begin{align*}
\frac{f(S_{gr})}{f(S_{opt})}\ge \sum_{i=1}^T c^*_i,
\end{align*}
where $c^*=(c^*_1,\dots,c^*_T)$ is an optimal solution to the dual problem
\begin{align}\label{eq:dualprob1}
\textrm{maximize}\quad&\sum_{i=1}^T c_i\\
\textrm{s.t.}\quad
& Tc_t+\alpha\sum_{i=t+1}^T c_i\le 1, && 1\le t \le T-m\nonumber\\
& (2T-m+1-t)c_t+\sum_{i=t+1}^T c_i\le 1, && T-m+1 \le t \le T\nonumber\\
&c_i\ge 0, && i=1,\dots,T.\nonumber
\end{align}
Define the vector $c=(c_1,\dots,c_T)$ by
\[ 
c_t = \begin{cases} 
      \frac{1}{T}\left(1-\frac{\alpha m}{T} \right)\left(1-\frac{\alpha}{T}\right)^{T-m-t}, & 1\le t \le T-m,\\
      \frac{T-m}{(2T-m+1-t)(2T-m-t)}, & T-m+1 \le t \le T.
   \end{cases} 
\]
An induction argument shows that $c$ is a feasible solution of problem \eqref{eq:dualprob1} (satisfying the $T$ first constraints with an equality), so that
\begin{align*}
\frac{f(S_{gr})}{f(S_{opt})} \ge \sum_{i=1}^T c_i. 
\end{align*}
Moreover, it is easy to compute that
\begin{align*}
\sum_{i=1}^{T-m} c_i &= \frac{1}{\alpha}\left(1-\frac{\alpha m}{T} \right)\left(1-\left(1-\frac{\alpha}{T}\right)^{T-m}\right)
\end{align*}
and
\begin{align*}
\sum_{i=T-m+1}^{T} c_i &= \frac{m}{T},
\end{align*}
which, after summation, yield the desired performance guarantee
\begin{align*}
\frac{f(S_{gr})}{f(S_{opt})} \ge G(T,\alpha,m).
\end{align*}

We next show the tightness of $\widetilde G(T,\alpha,n)$ by modifying the proof of \cite[Theorem 5.4]{CC84}. 
Let $1 \le T< n$ be any positive numbers. 
Pick any number $1\le r \le n/2$, let $X=\{a_1,\dots,a_{r}, b_1,\dots, b_{n-r}\}$ and let $f\colon 2^X\to \mathbb R_+$ be the set function
\begin{align*}
f(\{a_{i_1},\dots,a_{i_s},b_{j_1},\dots, b_{j_u}\})=
u+\left(1-\frac{\alpha u}{T}\right)\sum_{k=1}^s\left(1-\frac{\alpha}{T}\right)^{i_k-1},
\end{align*}
defined for all subsets $\{a_{i_1},\dots,a_{i_s},b_{j_1},\dots, b_{j_u}\}\subset X$.
Then $f(\emptyset)=0$. 
For any $S=\{a_{i_1},\dots,a_{i_s},b_{j_1},\dots, b_{j_u}\}\subsetneq X$, where $s<r$ and $u\le n-r$, and
$a_i\in X\setminus S$, we have
\begin{align*}
f(S\cup\{a_i\})-f(S)=
\left(1-\frac{\alpha u}{T}\right)\left(1-\frac{\alpha}{T}\right)^{i-1}\ge 0.
\end{align*}
For any $S=\{a_{i_1},\dots,a_{i_s},b_{j_1},\dots, b_{j_u}\}\subsetneq X$, where $s\le r$ and $u< n-r$, and
$b_j\in X\setminus S$, we have
\begin{align*}
f(S\cup\{b_j\})-f(S)=
1-\frac{\alpha}{T}\sum_{k=1}^s\left(1-\frac{\alpha}{T}\right)^{i_k-1} \ge 0.
\end{align*}
By recalling that a set function $g\colon 2^X\to \mathbb R_+$ is submodular if and only if 
\begin{align*}
g(S\cup\{x\})-g(S)\ge g(R\cup\{x\})-g(R),
\end{align*}
for all $S\subset R\subsetneq X$ and $x\in X\setminus R$ (e.g., \cite{NWF78}), 
these inequalities show that $f$ is submodular and nondecreasing. Moreover, 
\begin{align*}
\max\left\{1-\frac{f(X)-f(X\setminus\{x\})}{f(\{x\})}\colon x\in X, f(\{x\})\ne 0\right\}\\
=1-\frac{f(X)-f(X\setminus\{a_i\})}{f(\{a_i\})}
=\alpha,
\end{align*}
for any $1\le i\le r$, so $f$ has total curvature $\alpha$.

Consider next the case where $T> n/2$. Set $r=n-T$, so that $r<n/2<T$ and $n-r=T$. 
It is easy to verify that $S_{opt}=\{b_{1},\dots,b_{T}\}$ is an optimal solution to problem \eqref{prob1} with $f(S_{opt})=T$.
Since $f(\{a_1\})=f(\{b_j\})=1$, for any $1\le j \le T$, the greedy algorithm can choose the element $a_1$ at the first iteration. Assume next that the greedy algorithm has chosen $S_{t-1}=\{a_1,\dots, a_{t-1}\}$ for some $t \le n-T$. Using the fact 
\begin{align*}
\sum_{k=1}^l
\left(1-\frac{\alpha}{T}\right)^{k-1}=\frac{T}{\alpha}\left(1-\left(1-\frac{\alpha}{T}\right)^l\right)
\end{align*}
it is easy to see that 
\begin{align*}
f(S_{t-1}\cup\{a_t\})=
f(S_{t-1}\cup\{b_j\})=
\sum_{i=1}^t\left(1-\frac{\alpha}{T}\right)^{i-1},
\end{align*}
so the greedy algorithm can choose $a_t$ at the $t$th iteration. 
We therefore can have $S_{gr}=\{a_1,\dots a_{n-T}, b_1,\dots, b_{2T-n}\}$.
This solution has the value
\begin{align*}
f(S_{gr})
=
\frac{T}{\alpha}\left(1-\left(1-\frac{\alpha m}{T}\right)\left(1-\frac{\alpha}{T}\right)^{n-T}\right).
\end{align*}
The claim follows because $m=|S_{opt}\cap S_{gr}|=2T-n$, whence we obtain $n-T=T-m$.

The proof of case $T\le n/2$ is easier, so we omit its proof.

\end{document}